\newtheorem{theorem}{Theorem}
\newtheorem*{MT}{Menger's Theorem}
\newtheorem{lemma}[theorem]{Lemma}
\newtheorem{proposition}[theorem]{Proposition}
\theoremstyle{definition}
\newtheorem*{notation}{Notation}
\newtheorem*{ack}{Acknowledgement}
\newtheoremstyle{note}
{3pt}{3pt}{}{}
{\itshape}
{\normalshape{:}}
{.5em}{}
\theoremstyle{note}
\newtheorem{clm}{Claim}
\newtheoremstyle{pfs}
{}{}{}{1em}
{\itshape}
{\itshape{.}}
{.5em}{}
\theoremstyle{pfs}
\newtheorem*{pf}{Proof}
\DeclareMathOperator{\Gal}{Gal}
\newcommand{\qedtri}{\hfill \ensuremath{\triangle}}
\begin{document}

\title{Improved upper bounds on longest-path and maximal subdivision transversals}
\author{H. A. Kierstead\thanks{Email: kierstead@asu.edu} ~and E. R. Ren\thanks{Email: eren2@asu.edu}\\
 Arizona State University\\ Tempe, AZ, USA}
\maketitle
\begin{abstract}
   Let $G$ be a connected graph on $n$ vertices. The Gallai number $\Gal(G)$ of $G$ is the size of the smallest set of vertices that meets every maximum path in $G$. Grünbaum constructed a graph $G$
   with $\Gal(G)=3$. Very recently,  Long, Milans, and Munaro,
    proved that $\Gal(G)\le 8n^{{3}/{4}}$. This was the first  sublinear upper
   bound on $\Gal(G)$ in terms of $n$. We
   improve their bound to $\Gal(G)\leq
   5 n^{{2}/{3}}$. We also tighten a more general result of 
   Long et al. For a multigraph $M$, we prove that if the set $\mathcal L(M,G)$ of 
   maximum $M$-subdivisions in $G$ is pairwise intersecting  and 
   $n\ge m^{6}$, then $G$ has a set of vertices with size at most
    $5 n^{{2}/{3}}$ that meets every $Q\in \mathcal L(M,G)$
\end{abstract}

\section{Introduction}
For a graph $G=(V,E)$, let $|G|=|V|$ and $\|G\|=|E|$.  Two graphs or vertex sets \emph{meet} if they have a common vertex. Define the Gallai number  $\Gal(G)$ of $G$ to be the size of the smallest set $S\subseteq V$ that meets every longest path in $G$. 
It is folklore \cite[Exercise 1.2.40]{West} that 
if $G$ is connected then any two longest paths have a common vertex. 
This result prompted Gallai  \cite{Gal} to ask
whether $\Gal(G)=1$ for all connected graphs $G$.
Walther \cite{Wal} found a graph $G$ with
$\Gal(G)=2$ and $|G|=25$. Then Walther and  Voss \cite{WV}, and
independently Zamfirescu \cite{Zam2}, 
observed that replacing a vertex $v$ of the Petersen
graph by three leaves, each 
adjacent to a distinct neighbor
of $v$, yields a graph  $G$ with $\Gal(G)=2$ and
$|G|=12$. Grünbaum \cite{Gru} later produced a graph $G$ with
$\Gal(G)=3$ and $|G|= 324$; soon after Zamfirescu
\cite{Zam2} found a 270-vertex example. Both Walther
and Zamfirescu \cite{Zam1} then posed the still-open question of
whether the Gallai number has a constant upper
bound for
connected graphs. It even remains open whether any
connected graph $G$ has $\Gal(G)\geq 4$. On the
upper end, Rautenbach and Sereni  \cite{RS} proved in 2014
that all connected graphs $G$ satisfy $\Gal(G)\leq \lceil
|G|/4-|G|^{{2}/{3}}/90 \rceil$. 
In 2020, Long,
Milans, and Munaro \cite{MML} proved the first sublinear upper bound: 
$\Gal(G)\le 8|G|^{3/4}$. 

In fact, Long et al.\ proved a more general result. 
Let $M$ be a connected multigraph. An $M$-\emph{subdivision} is (a copy of) a graph 
obtained from $M$ by subdividing each of its edges $0$ or more times. 
For example, a path is a $K_{2}$-subdivision, and a cycle is  
a $C_{1}$-subdivision, where $C_{1}$ is the multi-cycle with one vertex and one loop. 
 An $M$-subdivision  $Q\subseteq G$ is \emph{maximum} if no $M$-subdivision has more edges than $Q$. 
Let $\mathcal L(M,G)$ be the set of maximum $M$-subdivisions in $G$. A \emph{transversal}\footnote{We use ``transversal'' in the hypergraph sense, i.e., a vertex-cover, not in the combinatorics sense, which would require unique intersections.} of $\mathcal L(M,G)$ is a set of vertices $S\subseteq V(G)$ that meets every $Q\in \mathcal L(M,G)$.
Let $\tau(M,G)$, be the size of a minimal transversal of $L(M,G)$. 
So $\tau(K_{2},G)=\Gal(G)$. 
A family of sets (e.g.\ $\mathcal L(M,G)$) is \emph{pairwise intersecting} if any two of its members meet.
As shown in \cite{MML}, it is easy to check that if $G$ is $(\|M\|^{2}+1)$-connected then $\mathcal L(M,G)$ is pairwise intersecting. The following proposition somewhat improves this bound when $M$ has cut edges, and it is tight for our two  motivating examples, $M\in \{K_{2},C_{1}\}$. We prove it in Section~3.

\begin{proposition}\label{prop:I}
Let $M$ be a connected multigraph with $c$ cut-edges, and let $G$ be a graph. If $\mathcal L(M,G)$ is not pairwise intersecting then 
the connectivity of $G$ is at most $\|M\|^{2}-c\|M\|+\binom{c}{2}$.
\end{proposition}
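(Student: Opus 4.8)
The plan is to prove the contrapositive: assuming $\mathcal L(M,G)$ has two disjoint members $Q_1,Q_2$, show $\kappa(G)\le \|M\|^2-c\|M\|+\binom c2$. Put $p=\|M\|$; as both subdivisions are maximum, $\|Q_1\|=\|Q_2\|=:N$. For $i\in\{1,2\}$ and $e\in E(M)$ let $P^i_e\subseteq Q_i$ be the path subdividing $e$, let $Q_i[H]$ denote the part of $Q_i$ subdividing a submultigraph $H\subseteq M$, and write $u_i$ for the $Q_i$-copy of a vertex $u$ of $M$. Menger's Theorem gives $t:=\min\{\kappa(G),|V(Q_1)|,|V(Q_2)|\}$ pairwise vertex-disjoint $V(Q_1)$--$V(Q_2)$ paths internally disjoint from $Q_1\cup Q_2$, which I call \emph{connectors}; a short separate argument (or a known fact about subdivisions in highly connected graphs) should show $|V(Q_i)|\ge\kappa(G)$, so $t=\kappa(G)$. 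Assign each connector $R$ a \emph{type} $(e,f)\in E(M)^2$ where $R$ meets $P^1_e$ and $P^2_f$ (picking an incident edge when a foot of $R$ is a branch vertex). The strategy is to show that too many connectors, or connectors of too few types, let one splice pieces of $Q_1$, $Q_2$ and connectors into an $M$-subdivision with more than $N$ edges, contradicting maximality.

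Two rerouting moves do most of the work. \textbf{(B)} If connectors $R,R'$ share a type $(e,f)$, splice their feet on $P^1_e$ and on $P^2_f$ through $R,R'$ to get a longer $e$-path inside $Q_1$ and, symmetrically, a longer $f$-path inside $Q_2$; a short count shows the two resulting $M$-subdivisions have edge-counts summing to $2N+2(\|R\|+\|R'\|)>2N$, so one exceeds $N$. (For $e=f$ this is the classical argument that two longest cycles of a $2$-connected graph meet.) \textbf{(A)} If $R$ has feet $x\in P^1_e$, $y\in P^2_e$ for a cut edge $e=uv$ with $M-e=M_u\sqcup M_v$, form $Q_3:=Q_1[M_u]\cup\bigl(P^1_e[u_1,x]\cup R\cup P^2_e[y,v_2]\bigr)\cup Q_2[M_v]$ and its mirror $Q_3':=Q_2[M_u]\cup\bigl(P^2_e[u_2,y]\cup R\cup P^1_e[x,v_1]\bigr)\cup Q_1[M_v]$; each arc of $P^1_e,P^2_e$ and each $M_u/M_v$-part is used once while $R$ is used twice, so $\|Q_3\|+\|Q_3'\|=2N+2\|R\|>2N$. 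Thus (B) forces the $\kappa(G)$ types to be pairwise distinct (giving $\kappa(G)\le p^2$, the bound of Long, Milans and Munaro), and (A) additionally rules out the $c$ diagonal types $(e,e)$ with $e$ a cut edge, giving $\kappa(G)\le p^2-c$.

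To reach $p^2-cp+\binom c2$ one needs a third, ``absorbing'' move at cut edges. Let $e=uv$ be a cut edge, $M-e=M_u\sqcup M_v$, and suppose a connector $R$ has foot $x$ on $P^1_e$ and its other foot on an edge-path of $Q_2[M_u]$. Attaching to $Q_1[M_u]\cup Q_2[M_v]$ the $u_1$-to-$v_2$ path $P^1_e[u_1,x]\cup R\cup S\cup P^2_e$, where $S$ runs inside $Q_2[M_u]$ to $u_2$, produces an $M$-subdivision, so maximality of $\|Q_1\|$ forces $\|R\|+\|S\|+\|P^2_e\|+\|Q_2[M_v]\|\le d_{P^1_e}(x,v_1)+\|Q_1[M_v]\|$. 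The construction with $Q_1$ and $Q_2$ interchanged gives the mirror inequality for any connector $R'$ with a foot on $P^2_e$ and its other foot in $Q_1[M_u]$; adding the two, cancelling the $\|Q_i[M_v]\|$ terms, and using $d_{P^1_e}(x,v_1)\le\|P^1_e\|$, $d_{P^2_e}(x',v_2)\le\|P^2_e\|$ yields $\|R\|+\|R'\|+\|S\|+\|S'\|\le 0$, impossible. Hence, for every cut edge $e$ and each of the two sides of $M-e$, at most one of $Q_1,Q_2$ admits a connector from its $e$-path ``across'' that side. Iterating over the cut edges from the leaves of $M$'s block tree inward---normalizing one of $Q_1,Q_2$ at each step and then recursing on the contracted target multigraph---should make the $i$-th cut edge contribute at most $i-1$ connectors, for a telescoping total of $\binom c2$ on top of the $(p-c)p$ coming from the non-cut edge-paths.

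I expect moves (A) and (B) to be routine and the absorbing move a short computation; the crux is organizing the iteration in the last paragraph so that the constant comes out exactly $p^2-cp+\binom c2$. The subtleties I foresee there are: making the charge of each connector to an edge-path well defined when one of its feet is a branch vertex of $Q_i$ (then lying on several edge-paths at once); dealing with connectors from $P^1_e$ whose other foot lands on the $M_v$-side rather than the $M_u$-side; and, if the argument is run by induction on $c$, tracking how $\kappa(G)$ interacts with the shrinking ambient graph and target multigraph. The ``$|V(Q_i)|\ge\kappa(G)$'' point, needed to produce enough connectors, must also be nailed down (or sidestepped by treating the small case separately).
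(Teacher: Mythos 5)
Your setup (contrapositive, Menger, assigning each connector a type $(e,f)\in E(M)^2$) and your moves (B) and (A) coincide with the paper's proof: (B) is the paper's statement that the auxiliary bipartite multigraph $\mathcal H$ of types is simple, giving the bound $\|M\|^2$, and (A) is the exclusion of the diagonal types $(e,e)$ for cut edges $e$, which the paper's final count uses implicitly. Your ``absorbing move'' is also correct --- the cancellation does yield $\|R\|+\|R'\|+\|S\|+\|S'\|\le 0$, a contradiction --- and it is in fact \emph{stronger} than the paper's corresponding step: the paper only shows that for a cut edge $e$ and a \emph{fixed} $e'$ on one side of $M-e$ the two types $(e,e')$ and $(e',e)$ cannot both occur, whereas you forbid the coexistence of any type $(e,f)$ with any type $(g,e)$ for $f,g$ on the same side. (The worry about $|V(Q_i)|\ge\kappa(G)$ is elided in the paper as well, which likewise just invokes Menger, so I do not count it against you.)

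The genuine gap is the final count, the step you yourself flag as the crux. The proposed accounting --- ``$(p-c)p$ from the non-cut edge-paths plus at most $i-1$ connectors from the $i$-th cut edge, telescoping to $\binom{c}{2}$'' --- is not a valid description of where the connectors can live: a single cut edge $e$ may carry up to $\|M\|-1$ connectors of type $(e,f)$ if it happens that no connector of type $(g,e)$ exists, so the cut edges do not contribute only $\binom{c}{2}$ in total, and the block-tree recursion does not repair this. The fix is to drop the iteration entirely. Specializing your absorbing move to $g=f$, together with move (A), gives: for every cut edge $e$ and every edge $f\ne e$, at most one of the two types $(e,f)$, $(f,e)$ occurs, and $(e,e)$ never occurs. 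Now count ordered pairs directly, as the paper does: the $(\|M\|-c)^2$ types with both coordinates non-cut are unrestricted; each of the $c(\|M\|-c)+\binom{c}{2}$ unordered pairs of distinct edges containing at least one cut edge contributes at most one type; and the $c$ cut diagonals contribute nothing. The total is $(\|M\|-c)^2+c(\|M\|-c)+\binom{c}{2}=\|M\|^2-c\|M\|+\binom{c}{2}$. (Your stronger per-side exclusion also yields the bound by inclusion--exclusion --- each cut edge forbids at least $\|M\|$ types and the forbidden sets of two cut edges overlap in at most one type --- but the direct count is shorter.)
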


 Long et al.\ proved:
\begin{theorem}[\cite{MML}]
Let $M$ be a connected multigraph and $G$ be a graph. If $\mathcal L(M,G)$ is pairwise intersecting then $\tau(M,G)\le 8\|M\|^{5/4}|G|^{3/4}$.
\end{theorem}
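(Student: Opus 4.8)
The plan is a two–scale exchange argument, extending the classical rotation technique for longest paths. Throughout write $m=\|M\|$ and $n=|G|$, and recall that a connected $M$ makes every $Q\in\mathcal L(M,G)$ the union of at most $m$ internally disjoint \emph{arcs} joining its at most $m+1$ \emph{branch vertices}. Fix one $Q_0\in\mathcal L(M,G)$ and put $\ell=\|Q_0\|$. Since $\mathcal L(M,G)$ is pairwise intersecting, $V(Q_0)$ is already a transversal; the only issue is that $|V(Q_0)|=\ell+1$ can be linear in $n$, so it must be thinned. Two naive repairs fail: keeping all of $V(Q_0)$ is too big when $\ell$ is large, and keeping a random $p$-fraction of $V(Q_0)$ fails because a member $Q$ can meet $Q_0$ in a single vertex, so no union bound controls it. This is precisely why an exchange argument is needed.

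\emph{Landmarks.} Choose a scale $t$ (optimized below; it comes out of order $\ell^{1/4}$ up to powers of $m$). Let $S_1$ consist of all branch vertices of $Q_0$ together with every $t$-th vertex along each arc, so $|S_1|\le m+\ell/t$. If $\ell< mt$ then $|V(Q_0)|<mt$, which already beats the target for the eventual $t$; so assume $\ell\ge mt$ and hence $|S_1|\le 2\ell/t$. It remains to stab the members of $\mathcal L(M,G)$ that avoid $S_1$, and for these $V(Q)\cap V(Q_0)$ is a nonempty set of interior vertices lying in the $<t$-vertex \emph{blocks} that the landmarks cut out of the arcs.

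\emph{Localization and exchange — the crux.} Given an $S_1$-avoiding $Q$, pick $v\in V(Q)\cap V(Q_0)$ in a block $B$ of an arc with branch endpoint $x$ nearer to $v$, and let $R$ be the sub-arc of $Q_0$ from $x$ to $v$, continued past $x$ inside $Q_0$ as far as it stays a path; $R$ meets $Q$ only at $v$. Replacing a shortest $v$–incident piece of $Q$ by $R$ produces an $M$-subdivision, so maximality of $Q_0$ forces that piece of $Q$ to be long; running the symmetric exchange (rerouting $Q_0$ through a piece of $Q$) and combining the two length inequalities across two blocks that an $S_1$-avoiding $Q$ might hit — then iterating — is where the real work lies. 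The outcome I would aim to establish is: (i) the blocks that any $S_1$-avoiding $Q$ can hit are confined to bounded stretches near the branch vertices of $Q_0$, so that only $O(m)$ times a small power of $t$ many blocks are relevant; and (ii) within each such block $B$ all $S_1$-avoiding members hitting $B$ coincide with $Q_0$ outside a short neighborhood of $B$ and pairwise intersect, hence are all stabbed by a single vertex $s_B$ (using pairwise intersection a second time). Then $S=S_1\cup\{s_B\}$ is a transversal.

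\emph{Balancing and the main obstacle.} With the above, $|S|\le 2\ell/t+b$, where $b$ is the number of relevant blocks, and balancing the landmark cost $\ell/t$ against $b$ at $t$ of order $(\ell/m)^{1/4}$ gives a bound of order $m^{a}\ell^{3/4}$ for a small power $a$; since $\ell\le n$, and tracking the $m$ arcs and the $\|M\|^2$-type bounds on the lengths of the exchange detours, this tightens to $8\|M\|^{5/4}n^{3/4}$ after optimizing the implied constants. The hard part is unmistakably the exchange step of the previous paragraph: converting ``$\mathcal L(M,G)$ pairwise intersecting'' together with ``$Q_0$ maximum'' into the quantitative clustering-plus-cheap-stabbing statement, with exactly the right exponents. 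Everything else — the landmark count, the easy case $\ell<mt$, and the final optimization of $t$ — is bookkeeping.
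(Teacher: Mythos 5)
This statement is quoted from Long, Milans, and Munaro rather than proved in the paper, but the paper's own proof of its sharpened version follows the same method as \cite{MML}, and that method is entirely different from yours: one iteratively extracts small Menger separators to build a ``pretransversal'' $(X,Y)$ with $n^{1/4}|Y|\le|X|$ (the present paper uses $n^{1/3}$), so that in the residual graph $H$ any two large vertex sets are joined by many disjoint paths; these connectors are then used to produce a long cycle that meets every maximum $M$-subdivision, and the final transversal is $Y$ together with the vertex set of such a cycle --- not a sparsified copy of a fixed maximum subdivision $Q_0$.

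Your proposal has a genuine gap exactly where you flag ``the real work.'' Claim (i) --- that the blocks hit by $S_1$-avoiding members are confined to bounded stretches near the branch vertices --- is asserted without any mechanism: the exchange inequality only forces certain detours of $Q$ to be long, and nothing in it confines the intersection points $V(Q)\cap V(Q_0)$ to few blocks. Claim (ii) is worse: from ``the members hitting a block $B$ pairwise intersect'' you conclude they ``are all stabbed by a single vertex $s_B$,'' but pairwise intersection does not imply a common vertex. That failure of the Helly property is the entire subject of this problem --- Gr\"unbaum's example has pairwise-intersecting longest paths requiring three vertices to stab --- and localizing the family by where it meets $Q_0$ does not restore Helly, since the members themselves need not lie anywhere near $B$. (Even the splicing step needs care for general $M$: replacing a ``$v$-incident piece'' of $Q$ by a path $R$ does not automatically yield an $M$-subdivision.) Without (i) and (ii) the count $|S|\le 2\ell/t+b$ and the optimization of $t$ have nothing to rest on, so what you have is a plan whose central step is both unproved and, in the form stated, unprovable.
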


Notice that if $|G|\le\|M\|^{5}$ then  
$\tau(M,G)\le|G|< 8\|M\|^{5/4}|G|^{3/4}$. Here we tighten their 
bound  by lowering the exponents when $|G|>\|M\|^{3}$, 
and by eliminating the dependence on $M$ 
when $|G|>\|M\|^{6}$.

\begin{theorem}\label{Main} Let $M$ be a connected multigraph and $G$ be a graph. If $\mathcal L(M,G)$ is pairwise intersecting and $\|M\|^{3}<|G|$ then $\tau(M,G)\le \max\{5|G|^{2/3},2\|M\|^{2}|G|^{1/3}\}$. In particular, if $|G|\ge\|M\|^{6}$ then $\tau(M,G)\le5|G|^{2/3}$.
\end{theorem}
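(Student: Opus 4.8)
The plan is to fix a single maximum $M$-subdivision $Q$ and split on $|V(Q)|$. Since $\mathcal L(M,G)$ is pairwise intersecting, $V(Q)$ is itself a transversal, so if $|V(Q)|\le 5|G|^{2/3}$ we are immediately done (and $5|G|^{2/3}\le\max\{5|G|^{2/3},2\|M\|^2|G|^{1/3}\}$). All the work is in the case $|V(Q)|>5|G|^{2/3}$, where $V(Q)$ is too large and we must extract a sparse transversal from the structure of $Q$. First I would discard the isolated vertices of $M$ (a maximum $M$-subdivision is obtained from a maximum subdivision of the rest of $M$ by re-adding them), so that $|V(M)|\le\|M\|+1$; write $m=\|M\|$, $n=|G|$, and note $m<n^{1/3}$ since $m^{3}<n$. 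Also handle the trivial case $m=0$ directly.

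Write $B$ for the branch vertices of $Q$ (so $|B|\le m+1$) and $P_{1},\dots,P_{m}$ for its subdivision paths — the internally disjoint paths that are the images of the edges of $M$, a loop of $M$ giving a cycle through one branch vertex — with $P_{e}$ having $\ell_{e}$ internal vertices $u^{e}_{1},\dots,u^{e}_{\ell_{e}}$ listed along $P_{e}$, so $\sum_{e}\ell_{e}=|V(Q)|-|B|\le n$. For a spacing parameter $t$ of order $n^{1/3}$, I would take $S$ to consist of: $B$; one ``sample'' every $t$ internal vertices along each $P_{e}$ (the $u^{e}_{t},u^{e}_{2t},\dots$); and, next to each end of each $P_{e}$, a ``protective block'' of internal vertices of $P_{e}$, of length of order $mt$. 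The sample contribution is $\sum_{e}\lceil \ell_{e}/t\rceil\le n/t+m$, of order $n^{2/3}$; the protective contribution is at most (number of path-ends)$\times mt\le 2m\cdot mt$, of order $m^{2}n^{1/3}$ — the blocks must be this long because a maximum subdivision that dodges the samples could, a priori, ``short-change'' $Q$ by up to one sample-gap on each of its $m$ subdivision paths, so the outer segments of $P_e$ next to the branch vertices must carry roughly $mt$ of slack. Tuning $t$ (possibly as a function of $m$) to balance these two contributions gives $|S|\le\max\{5n^{2/3},2m^{2}n^{1/3}\}$; and since $5n^{2/3}\ge 2m^{2}n^{1/3}$ exactly when $n\ge(2m^{2}/5)^{3}$, the ``in particular'' clause ($n\ge m^{6}$) follows.

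It remains to show $S$ is a transversal, which is the crux. Suppose some $Q'\in\mathcal L(M,G)$ avoids $S$. As $\mathcal L(M,G)$ is pairwise intersecting and $Q'$ misses $B$, $Q'$ meets the interior of some subdivision path $P_{e}$; since $Q'$ also misses the protective blocks, the first and last vertices $a,b$ of $V(Q')\cap V(P_{e})$ in the order of $P_{e}$ lie well inside $P_{e}$, so the two outer segments of $P_{e}$ from the branch vertices to $a$ and to $b$ are long and, by the choice of $a$ and $b$, internally disjoint from $Q'$. The idea is then to surger $Q$ and $Q'$ together into an $M$-subdivision with strictly more edges than $Q$, contradicting maximality: keep most of $Q$, but reroute through a suitable sub-subdivision of $Q'$, spending the long outer segments of $P_{e}$ to ``pay for'' the detour and controlling the detour's length via the sample spacing (again $Q'$ misses every sample) and the pairwise-intersecting hypothesis applied to the pieces of $Q'$ between consecutive meetings with $Q$. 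The specialisation $M=K_{2}$ — $Q$ and $Q'$ longest paths, the reroute a detour along $Q'$ — is already the heart of the matter. The main technical obstacle is making these splices genuinely internally disjoint wherever a valid $M$-subdivision demands it, keeping the branch structure of $M$ intact, and book-keeping the edge count over all configurations (in particular when $Q'$ meets $P_{e}$ in several windows between consecutive samples, or when $a$ or $b$ is itself a branch vertex of $Q'$); the choices of $t$ and of the protective-block length are exactly what is needed to keep the error terms strictly below the gain.
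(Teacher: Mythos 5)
There is a genuine gap, and it is at the crux: the claim that your set $S$ (branch vertices, an $n^{1/3}$-spaced sample along each subdivision path of one fixed $Q\in\mathcal L(M,G)$, and protective blocks) is a transversal is not established, and the proposed surgery cannot be completed. Already for $M=K_{2}$: let $Q=v_{1}\dots v_{L}$ and suppose $Q'=w_{1}\dots w_{L}$ meets $Q$ at exactly two vertices $v_{a}=w_{i}$ and $v_{b}=w_{j}$. The only splices that are guaranteed to be paths are the one-sided ones, $Qv_{a}$ followed by an arm of $Q'$ at $w_{i}$ (and symmetrically at $v_{b}$), because the outer segments of $Q$ are disjoint from $Q'$; these give only the constraints $a-1\le(L-1)/2\le b-1$ together with bounds tying $i$ to $a$ and $j$ to $b$. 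The two-sided splice $Qv_{a}\cup w_{i}Q'w_{j}\cup v_{b}Q$ is in general only a walk, since $w_{i}Q'w_{j}$ may re-enter $v_{a}Qv_{b}$. Taking $a\approx L/4$, $b\approx 3L/4$ and $i,j$ near the quartiles of $Q'$ satisfies every such constraint while $v_{a},v_{b}$ avoid any sparse sample: avoiding your samples restricts which vertices of $Q$ are hit, but says nothing about how far apart the hits are, nothing about the lengths of the pieces of $Q'$ between them, and the pairwise-intersecting hypothesis gives you nothing about those pieces because they are not maximum $M$-subdivisions. So there is no augmentation and no contradiction; the ``book-keeping'' you defer is not a technicality but the missing theorem.

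The paper's proof supplies exactly the leverage your surgery lacks, and does so by a different mechanism. It iteratively removes ``pretransversals'' $(X,Y)$ with $n^{1/3}|Y|\le|X|$ until, in the residual graph $H$, Menger's Theorem guarantees that any two disjoint vertex sets of size $\ge s$ are joined by at least $s/n^{1/3}$ \emph{disjoint} paths (Claim~\ref{Sep}); this is where pairwise intersection is actually spent. With that many disjoint connectors one can pigeonhole two of them into the same subdivided edge and apply Lemma~\ref{conpath} to augment --- but the object being augmented is a \emph{cycle}, not $Q$ itself, precisely because a cycle entered and exited at two points always retains one of its two arcs, so the splice is automatically a cycle. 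The final transversal is $Y\cup V(C)$ for a minimum transversal cycle $C$, shown to have at most $4n^{2/3}$ vertices via Claims~\ref{nottr} and~\ref{small}; it is not a sample of a single maximum subdivision. I would encourage you to identify where, in your scheme, $n$ disjoint paths between large sets could be produced; without a reduction like the pretransversal recursion, they cannot.
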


\begin{notation} Fix $i,j,n\in \mathbb N$. 
Set $[n]:=\{i\in\mathbb N:1\le i\le n\}$ and $i\oplus j:=i+j\mod n$. 
We denote a path $P$ with $V(P)=\{v_1,\dots,v_{n}\}$ and
 $E(P)=\{v_1v_{2},\dots, v_{n-1}v_{n}\}$ 
 by $v_1\dots v_n=v_{n}\dots v_{1}$, 
 and set $v_iP:=v_i\dots v_n$, $Pv_i:=v_1\dots v_i$ and 
 $v_iPv_j:=v_i\dots v_j$. We form walks by concatenating paths: in a 
 graph $G$, if $Q:=w_1\dots w_s$ then $Pv_iw_jQ$ is defined to be 
 $v_1\dots v_iw_j\dots w_s$ if $v_iw_j\in E(G)$ and $v_1\dots 
 v_iw_{j+1}\dots w_s$ if $v_i=w_j$; else it is undefined. 
 Let $d_{G}(x,y)$ denote the distance form $x$ to $y$ in $G$. 
 The cycle $C:=P+v_{n}v_{1}$ is denoted by $v_{1}\dots v_{n}v_{1}$. 
 If $Q:=v_{i}\dots v_{j}$ then  $v_{i}(C-E(Q))v_{j}$ is the 
 path  $v_{i}v_{i\oplus (n-1)}...v_{j\oplus 1}v_{j}$. 
 Let $v_{i}Cv_{j}$ denote the longer of $v_{i}(C-v_{j\oplus 1})v_{j}$ 
 and $v_{i}(C-v_{i\oplus1})v_{j}$ with ties going to the first. 
Let $A,B\subseteq V(G)$. An $A,B$-path is a path  
$P=v_{1}\dots v_{t}$ with $V(P)\cap A=\{v_{1}\}$ and
 $V(P)\cap B=\{v_{t}\}$.  
An $A,B$\emph{-separator} is a set $S\subseteq V(G)$ that meets every 
$A,B$-path. An $A,B$\emph{-connector} is a set of 
disjoint $A,B$-paths.  As in 
 Diestel \cite{Die}, we assume that $V(G)\cap E(G)=\emptyset$ and treat 
 $G$ as the set  $V(G)\cup E(G)$.
\end{notation}
We will need Menger's Theorem.
\begin{MT}[\cite{Men}] Let $G$ be a graph with $A,B\subseteq V(G)$.
 If $S$ is a minimum $A,B$-separator in $G$ and $\mathcal T$ is a maximum $A,B$-connector in $G$, then $|S|=|\mathcal{T}|$.
\end{MT}

\section{Small transversals of $\mathcal L(M,G)$}

In this section we prove Theorem~\ref{Main}. First we simplify our notation for this and the next section. Fix a connected multigraph $M=(W,F)$ with 
$\|M\|=:m$ and a graph $G$ with $|G|=:n$. For $H\subseteq G$, let $
\mathcal L(H):=\mathcal L(M,H)$,  $\tau(H):=\tau(M,H)$, $\mathcal 
L:=\mathcal L(G)$ and $\mu:=|Q|$, where $Q\in \mathcal L$. An 
$H$-\emph{transversal} is a transversal of $\mathcal L(H)$.  For a graph $Q\in \mathcal L$ and an
 edge $e:=uv\in F$, let $q^{u},q^{v}, Q_{e}$ be the branch vertices and subdivided edge of $Q$ corresponding to $u,v,e$.
We  start with two (easy) general lemmas on paths and cycles. 

\begin{lemma}\label{conpath}
Let $C=v_{1}P_{1}v_{2}P_{2}v_{3}P_{3}v_{4}P_{4}v_{1}$ be a cycle, where each $P_{i}$ is a path with $\|P_{i}\|\ge1$. Then $\|P_{1}\|<\|P_{2}v_{3}P_{3}v_{4}P_{4}\|$ or $\|P_{3}\|<\|P_{4}v_{1}P_{1}v_{2}P_{3}\|$. \qed
\end{lemma}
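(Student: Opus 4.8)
The statement is a pigeonhole-type inequality about a $4$-arc cycle, so the plan is to argue by contradiction and exploit the cyclic symmetry. Write $a_i := \|P_i\|$, so $\|C\| = a_1+a_2+a_3+a_4$. The first alternative fails exactly when $a_1 \ge a_2+a_3+a_4$ (note $\|P_2v_3P_3v_4P_4\| = a_2+a_3+a_4$ since concatenation at the shared vertices $v_3,v_4$ adds no edges), and the second fails exactly when $a_3 \ge a_4+a_1+a_2$. Suppose both fail. Adding the two inequalities gives $a_1+a_3 \ge a_1+2a_2+2a_3+2a_4$, i.e. $0 \ge 2a_2+a_3+2a_4$, which is impossible since each $a_i \ge 1$ by the hypothesis $\|P_i\|\ge 1$. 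Hence at least one of the two alternatives holds.

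The only point that needs care is the bookkeeping of edge counts under the concatenation notation: one must check that $\|P_2 v_3 P_3 v_4 P_4\|$ really is $a_2+a_3+a_4$ and not off by one or two, which follows because $v_3$ is the last vertex of $P_2$ and the first of $P_3$ (and similarly $v_4$), so no edges are created or lost at the junctions — this matches the "else it is undefined / drop the repeated vertex" convention fixed in the Notation block. I would state this parenthetically rather than belabor it.

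I do not anticipate a genuine obstacle here; the lemma is deliberately elementary and the contradiction is a one-line arithmetic consequence of $a_i\ge 1$. If one wanted a slicker phrasing, one could instead observe directly that $a_1 < a_2+a_3+a_4$ or $a_3 < a_1+a_2+a_4$ simply because not both $a_1$ and $a_3$ can be at least half of $a_1+a_2+a_3+a_4$ when $a_2,a_4\ge 1$; but the additive-contradiction form is the cleanest to write down and leaves no room for sign errors.
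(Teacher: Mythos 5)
Your argument is correct and is exactly the intended one: the paper offers no proof at all (the lemma is stated with a \qed as immediate), and the one-line counting argument you give --- if both alternatives fail, sum the two reversed inequalities and contradict $\|P_2\|,\|P_4\|\ge 1$ --- is what the authors are silently relying on. Two cosmetic points: the correct sum is $a_1+a_3\ge a_1+2a_2+a_3+2a_4$, giving $0\ge 2a_2+2a_4$ rather than your $0\ge 2a_2+a_3+2a_4$ (harmless, since both are contradictions), and you were right to read the trailing $P_3$ in $\|P_4v_1P_1v_2P_3\|$ as a typo for $P_2$.
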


\begin{lemma}\label{short}
Suppose $C\subseteq G$ is a cycle and $P=x\dots y\subseteq G$ is a path  with $x,y\in C$. If $\|P\|< d_{C}(x,y)$ then ther
e is a cycle $C'\subseteq G$ with $|C|/2<|C'|<|C|$. 
\end{lemma}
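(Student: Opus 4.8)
The plan is to first handle the case where $P$ is internally disjoint from $C$, and then reduce the general case to it.

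\emph{Internally-disjoint case.} Suppose no interior vertex of $P$ lies on $C$. Write $C=A_{1}\cup A_{2}$, where $A_{1},A_{2}$ are the two $x,y$-arcs of $C$ and $A_{1}$ is the shorter one, so $\|A_{1}\|=d_{C}(x,y)$ and $\|A_{2}\|=\|C\|-d_{C}(x,y)\ge \|C\|/2$ (since $d_{C}(x,y)\le\|C\|/2$). Put $C':=P\cup A_{2}$. Because $P$ meets $C$, hence $A_{2}$, only in $\{x,y\}$, and both $P$ and $A_{2}$ have length at least $1$, $C'$ is a cycle in $G$ with $\|C'\|=\|P\|+\|C\|-d_{C}(x,y)$. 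The hypothesis $\|P\|<d_{C}(x,y)$ gives $\|C'\|<\|C\|$, while $\|P\|\ge1$ and $\|A_{2}\|\ge\|C\|/2$ give $\|C'\|>\|C\|/2$. Since $|C'|=\|C'\|$ and $|C|=\|C\|$, this $C'$ is as required.

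\emph{General case.} List the vertices of $P$ in order as $x=p_{0},p_{1},\dots,p_{k}=y$, and let $p_{i_{0}}=x,p_{i_{1}},\dots,p_{i_{r}}=y$ (with $0=i_{0}<i_{1}<\dots<i_{r}=k$) be those lying on $C$; note $r\ge1$ as $x\ne y$, which holds since $d_{C}(x,y)>\|P\|\ge1$. For $t\in[r]$ set $P_{t}:=p_{i_{t-1}}Pp_{i_{t}}$; by the choice of the $i_{t}$, each $P_{t}$ is internally disjoint from $C$. If every $P_{t}$ satisfied $\|P_{t}\|\ge d_{C}(p_{i_{t-1}},p_{i_{t}})$, then summing over $t$ and using that $d_{C}$ is a metric on $V(C)$ (triangle inequality) would yield $\|P\|=\sum_{t}\|P_{t}\|\ge\sum_{t}d_{C}(p_{i_{t-1}},p_{i_{t}})\ge d_{C}(x,y)$, contradicting the hypothesis. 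Hence some $P_{t}$ has $\|P_{t}\|<d_{C}(p_{i_{t-1}},p_{i_{t}})$, and applying the internally-disjoint case with $(x,y,P)$ replaced by $(p_{i_{t-1}},p_{i_{t}},P_{t})$ produces the desired cycle $C'$.

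I do not expect a real obstacle here; the only points needing care are checking that $P\cup A_{2}$ is genuinely a cycle (which is precisely why one reduces to the internally-disjoint subpath) and tracking the strictness of the two length inequalities — the upper bound $\|C'\|<\|C\|$ coming from $\|P\|<d_{C}(x,y)$, and the lower bound $\|C'\|>\|C\|/2$ coming from $\|A_{2}\|\ge\|C\|/2$ together with $\|P\|\ge1$.
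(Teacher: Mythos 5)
Your proof is correct and follows essentially the same route as the paper's: both decompose $P$ at its intersection points with $C$, use the triangle inequality for $d_C$ to locate a segment $P_t$ shorter than the corresponding arc distance, and replace the short arc of $C$ by that segment. The only difference is presentational (you isolate the internally-disjoint case as a base case first), and your careful check of the two strict inequalities matches what the paper leaves implicit.
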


\begin{proof}
Let $V(P\cap C)=\{x_{0},\dots,x_{t}\}$, where   $P=x_{0}P_{1}x_{1}\dots x_{t-1}P_{t}x_{t}$, $x_{0}=x$, and $x_{t}=y$. As
\[\sum_{i\in [t]}\|P_{i}\|=\|P\|<d_{C}(x,y)\le \sum_{i\in[t]}d_{C}(x_{i-1},x_{i}),\]
there is $i\in[t]$ with $\|P_{i}\|<d(x_{i-1},c_{i})\le|C|/2$. Let $C'$ be the cycle obtained by replacing the short  $x_{i-1},x_{i}$-path in $C$ by $P_{i}$. 
\end{proof}

\begin{proof}[Proof of Theorem \ref{Main}]
Assume $\mathcal L$ is pairwise intersecting and $M^{3}<n$. We will show that $\tau(\mathcal L)\le 5 n^{2/3}$. Note that $\tau(\mathcal L)\le\mu$. 

Similarly to  \cite{MML}, a pair $(X, Y)$  is called an $H$-\emph{pretransversal} if  
$Y\subseteq X\subseteq V(H)$, $n^{1/3}|Y|\leq  |X|$, and for every graph 
$Q\in \mathcal{L}(H)$ either $Q\subseteq H-X$ or $Q \cap Y \neq 
\emptyset$. Then $Y$ is a $G$-transversal with $|Y|\le n^{2/3}$  
if and only if $(V,Y)$ is a $G$-pretransversal. 
Let $(X, Y)$ be a $G$-pretransversal with $|X|$ 
maximum; it exists because $(\emptyset, \emptyset)$ is a candidate. If 
$X=V$ then we are done with $\tau(\mathcal L)\le n^{2/3}$.  Else set $H:=G-X$. 
If $(X',Y')$, is an $H$-pretransversal then $(X'\cup X, Y\cup Y')$ is a $G$-pretransversal.
So by maximality:
\begin{equation}\label{Max}
    \mbox{There is no $H$-pretransversal.}
\end{equation}
If $S$ is an $H$-transversal then $S\cup Y$ is a $G$-transversal. 
Thus it suffices to show that
 $\tau(\mathcal L(H))\le \max \{m^{2}n^{1/3}, 4n^{2/3}\}$.  Arguing 
 by contradiction, assume
 \begin{equation}\label{Contra}
    \tau(\mathcal L(H))> \max \{m^{2}n^{1/3}, 4n^{2/3}\}
\end{equation}
 As we are working only in $H$, let $\tau:=\tau(\mathcal L(H))$, and let \emph{transversal} mean $H$-transversal. The proof follows easily from the next four claims. The last two depend on the first, but there are no other dependencies. Some readers may prefer to skip to the last paragraph of this section before reading the claims and their proofs.

\begin{clm}[\cite{MML}]\label{Sep} For all disjoint sets
$A,B\subseteq V(H)$ with $s:=\min(|A|,|B|)$, there is
an $A,B$-connector $\mathcal K$ in $H$  with $|\mathcal K|\geq s/n^{1/3}$.
\end{clm}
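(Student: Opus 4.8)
The plan is to prove Claim~\ref{Sep} by a direct application of Menger's Theorem together with the hypothesis that $H$ contains no $H$-pretransversal. Fix disjoint sets $A,B\subseteq V(H)$ and let $s=\min(|A|,|B|)$. Let $S$ be a minimum $A,B$-separator in $H$ and $\mathcal K$ a maximum $A,B$-connector; by Menger's Theorem $|S|=|\mathcal K|$, so it suffices to show $|S|\ge s/n^{1/3}$. Assume for contradiction that $|S|<s/n^{1/3}$, equivalently $n^{1/3}|S|<s\le|A|,|B|$.

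The key idea is that such a small separator $S$ would yield an $H$-pretransversal, contradicting~\eqref{Max}. First I would observe that since $S$ separates $A$ from $B$ in $H$, it separates $H$ into (at least) two parts: let $H_A$ be the union of $S$ with the components of $H-S$ meeting $A$, and $H_B=H-(H_A\setminus S)$, so that $V(H_A)\cap V(H_B)=S$ and there is no edge of $H$ between $V(H_A)\setminus S$ and $V(H_B)\setminus S$. Now consider any $Q\in\mathcal L(H)$. Since $Q$ is connected and every $V(H_A)\setminus S$ to $V(H_B)\setminus S$ path in $H$ meets $S$, if $Q\cap S=\emptyset$ then $Q\subseteq H_A-S$ or $Q\subseteq H_B-S$. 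Thus, setting $X:=S\cup (V(H_A)\setminus S)$ (i.e.\ $X=V(H_A)$) and $Y:=S$, every $Q\in\mathcal L(H)$ with $Q\cap Y=\emptyset$ satisfies $Q\subseteq H-X$ — except we also need to absorb the $Q\subseteq H_A-S$ case, which is handled because those $Q$ lie inside $X$, hence trivially meet... no: the definition only requires $Q\subseteq H-X$ or $Q\cap Y\ne\emptyset$, and a $Q$ contained in $H_A-S$ does neither. So instead I would take $X:=V(H_A)$ together with $Y:=S$ only when the ``small side'' is genuinely all of one part; the cleaner choice is $X:=V(H_B)$, $Y:=S$: then a $Q$ avoiding $Y=S$ lies in $H_A-S\subseteq H-X$ or in $H_B-S\subseteq X$. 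The remaining obstruction is exactly those $Q$ contained in $H_B-S$. To kill this, I would pick the side to be ``small'' in the right sense — but the honest resolution, and the one I expect the paper uses, is to induct/iterate: repeatedly peel off a side containing no maximum $M$-subdivision, or more simply to note that we may assume $|A|,|B|$ are large relative to $S$ and choose $X$ to be whichever of $V(H_A),V(H_B)$ contains no $Q\in\mathcal L(H)$, falling back on the size bound $n^{1/3}|S|<s\le\min(|V(H_A)\setminus S|,|V(H_B)\setminus S|)$ to guarantee $n^{1/3}|Y|=n^{1/3}|S|\le|X|$.

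So the precise construction I would carry out: if some $Q_0\in\mathcal L(H)$ satisfies $Q_0\subseteq H_B-S$, then $H_B-S$ itself contains a maximum $M$-subdivision, and I would recurse on $H_B$ with the induced separator; since $|V(H_B)|<|V(H)|$ this terminates, or — better — I would instead directly take $X:=V(H_A)$ and $Y:=S$ and note that any $Q\subseteq H_A-S$ would, by maximality of $\mu$, force $\mathcal L(H)$ to contain members on ``both sides'' which (using pairwise intersection of $\mathcal L$, though here we are in $H$) need not intersect — giving a contradiction unless all of $\mathcal L(H)$ lies on one side, in which case $Y=S$ trivially works with room to spare. In all cases $(X,Y)$ is an $H$-pretransversal: $Y\subseteq X\subseteq V(H)$, $n^{1/3}|Y|\le|X|$ because $|Y|=|S|<s/n^{1/3}\le|A|/n^{1/3}$ while $|X|\ge|A|>s> n^{1/3}|S|$ (here using that $A\setminus S$ lies entirely in one side), and the separation property gives the dichotomy for every $Q\in\mathcal L(H)$. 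This contradicts~\eqref{Max}, so $|S|\ge s/n^{1/3}$ and hence $|\mathcal K|=|S|\ge s/n^{1/3}$.

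The main obstacle is the case analysis around which side of the separator to designate as $X$: the pretransversal definition is asymmetric (it wants $Q\subseteq H-X$ or $Q$ meeting $Y$), so a maximum $M$-subdivision sitting entirely inside the chosen part $X$ but missing $S$ is a genuine problem. I expect this is dispatched either by an innermost-separator / minimality argument (choose $A,B$, or the separator, so that one side is ``clean'' of maximum $M$-subdivisions) or by absorbing that side wholesale into $X$ and checking the size inequality still holds; the size bookkeeping $n^{1/3}|Y|\le|X|$ is the only quantitative point and follows immediately from $|S|<s/n^{1/3}$ once we know one full side of size $\ge s$ lies in $X$.
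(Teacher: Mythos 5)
Your final construction is correct and is essentially the paper's proof: Menger's Theorem gives an $A,B$-separator $S$ with $n^{1/3}|S|<s$, pairwise intersection of $\mathcal L$ forces every member of $\mathcal L(H)$ avoiding $S$ into a single component $H'$ of $H-S$, and since $H'$ meets at most one of $A,B$ the pair $(X,Y):=(V(H-H'),S)$ satisfies $|X|\ge s>n^{1/3}|S|$ and is an $H$-pretransversal, contradicting \eqref{Max}. The only tidying needed is to replace your two-sided $H_A/H_B$ case analysis with this single-component formulation, which immediately dispatches the ``maximum subdivision sitting inside $X$'' worry that most of your write-up circles around.
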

\vspace{-.75em}
\begin{pf} Suppose not. By Menger's Theorem,
there is an $A,B$-separator $Y'$ in $H$ such that
$n^{1/3}|Y'|<s$. As $\mathcal L$ is pairwise intersecting, there is a
component  $H'$ of $H- Y'$ such that no
other component of $H-Y'$ contains a member of $\mathcal L(\mathcal H)$. 
Since $Y'$ separates $A$ and $B$, at least one
(say $A$) of $A$, $B$ must be contained in
$X':=V(H- H')$. 
Thus all $Q\in\mathcal L(H)$ intersect $Y'$
or are contained in $H'$.
 Also $n^{1/3}|Y'|\leq s\le |A|\leq
 |X'|$.  So  $(X',Y'$) is an
 $H$-pretransversal, contradicting
 \eqref{Max}. 
  \qedtri
\end{pf}

\begin{clm}\label{base}
$H$ has a cycle $C_{0}$ with $|C_{0}|>mn^{1/3}$.
\end{clm}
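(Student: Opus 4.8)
The plan is in three stages.

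\emph{First, I would turn the hypothesis \eqref{Contra} into a lower bound on $\mu$.} Because $(X,Y)$ was chosen with $|X|$ maximum and we are in the case $X\neq V$, the graph $H=G-X$ must contain a member of $\mathcal L(G)$: otherwise every $Q\in\mathcal L(G)$ meets $X$, hence meets $Y$, so $Y$ is a $G$-transversal and $(V,Y)$ a $G$-pretransversal strictly larger than $(X,Y)$. Hence $\mathcal L(H)\subseteq\mathcal L(G)$, so $\mathcal L(H)$ is pairwise intersecting, the vertex set of any $Q\in\mathcal L(H)$ is an $H$-transversal, and $\tau\le\mu$. Then \eqref{Contra} gives $\mu>\max\{m^{2}n^{1/3},4n^{2/3}\}$. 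Fixing $Q\in\mathcal L(H)$ and using $\|Q\|=\mu+m-|W|\ge\mu-1$, some arc satisfies $\|Q_{e^{*}}\|\ge\|Q\|/m>mn^{1/3}-1/m$; since $\|Q_{e^{*}}\|$ is a positive integer and $m^{3}<n$ (so $mn^{1/3}<n^{2/3}$), $Q$ has an arc of length at least $\lfloor mn^{1/3}\rfloor$.

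\emph{Second, I would dispose of every case in which a long cycle already lives inside $Q$.} If $M$ has a cycle $D$ containing $e^{*}$ (in particular if $e^{*}$ is a loop), then $\bigcup_{e\in D}Q_{e}$ is a cycle of $Q$ of length $\ge\|Q_{e^{*}}\|$; since $M$ is then not a tree we have $|W|\le m$, so $\|Q\|\ge\mu$ and $\|Q_{e^{*}}\|\ge\mu/m>mn^{1/3}$. If $e^{*}=uv$ is neither a loop nor a cut-edge, then $M-e^{*}$ is connected, so a $q^{u},q^{v}$-path in the subdivision of $M-e^{*}$ sitting inside $Q$ (namely $Q$ with the interior of $Q_{e^{*}}$ deleted), together with $Q_{e^{*}}$, is a cycle of length $\ge\|Q_{e^{*}}\|+1>mn^{1/3}$. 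And if $e^{*}=uv$ is a cut-edge but one endpoint, say $u$, is a non-cut-vertex of $M$ with $\deg_{M}(u)\ge2$, pick a second edge $e'=uw$ at $u$ (necessarily $w\neq v$) and a $q^{v},q^{w}$-path $R$ in $Q-q^{u}$ (which exists since $M-u$ is connected); then $Q_{e^{*}}\cup Q_{e'}\cup R$ is a cycle of length $\ge\|Q_{e^{*}}\|+2>mn^{1/3}$. In each of these cases we may take $C_{0}$ accordingly.

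\emph{Third — and this is where I expect the real difficulty — there remains the case that $e^{*}=uv$ is a cut-edge of $M$ both of whose endpoints are leaves or cut-vertices of $M$;} equivalently, the long arc $P:=Q_{e^{*}}=x_{0}\cdots x_{\ell}$ (with $\ell=\|Q_{e^{*}}\|\ge\lfloor mn^{1/3}\rfloor$) is buried in a tree-like part of $Q$, the extreme instance being $M$ a tree, where $Q$ has no cycle at all and the target length $mn^{1/3}$ is comparable to $\ell$. Here the cycle must be produced from $H$ outside $Q$ by invoking Claim~\ref{Sep}. I would take $P$ (or, when it is longer, a longest path of $Q$ through $e^{*}$, which can be roughly twice as long), choose $A$ and $B$ to be an initial and a terminal segment of $P$ separated by a gap, and get a connector $\mathcal K$ with $|\mathcal K|\ge\min(|A|,|B|)/n^{1/3}$ pairwise-disjoint $A,B$-paths. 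Each $R\in\mathcal K$ meets $P$ only at its two ends and inside the gap, so $R$ decomposes into ``ears'' internally disjoint from $P$; combining an ear with the sub-path of $P$ joining its endpoints — or two crossing ears, as in Lemma~\ref{conpath} — produces cycles, and I would argue that if all such cycles had length $\le mn^{1/3}$, then the $2|\mathcal K|$ endpoints of $\mathcal K$ would be confined to a sub-interval of $P$ of length $O(mn^{1/3})$ around the gap, forcing $|\mathcal K|$, and hence $\ell$, to be too small. The main obstacle is that this elementary ``one ear'' estimate only plainly gives a cycle of length about $\ell/2$, which beats $mn^{1/3}$ only once $n$ is well past $m^{6}$; in the range $m^{3}<n\lesssim m^{6}$ one needs a sharper extraction — for instance choosing $A,B$ to straddle the two sides $Q^{u},Q^{v}$ of the cut so that connector paths must avoid all of $Q$ save one vertex, iterating Claim~\ref{Sep}, or pulling a near-spanning cycle out of the ``ladder'' $P\cup\mathcal K$ — and it is there that Lemma~\ref{short} (to trade between cycles of comparable length) and Lemma~\ref{conpath} (to paste detours together) should carry the argument.
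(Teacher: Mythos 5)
There is a genuine gap, and it sits exactly where you flag it: your third stage, which covers the decisive case ($e^{*}$ a cut-edge with both ends leaves or cut-vertices — in particular $M=K_{2}$, the longest-path problem that motivates the whole theorem), is only a sketch, and the sketch does not close. Your stage 1 extracts from \eqref{Contra} only the weak consequence $\tau\le\mu$, hence $\mu>m^{2}n^{1/3}$, and then tries to manufacture the cycle from an arc of length about $mn^{1/3}$ by attaching ears obtained from Claim~\ref{Sep}. As you yourself observe, an ear-plus-subpath cycle is only guaranteed to capture a constant fraction of that arc, which does not beat the target $mn^{1/3}$ when $m^{3}<n\lesssim m^{6}$; moreover your claim that short cycles would ``confine the $2|\mathcal K|$ endpoints to a small subinterval'' is not right as stated, since the endpoints lie in $A$ and $B$, which you chose to be far apart, and a single $A,B$-path may meet $P$ many times in the gap, so the per-ear estimates do not accumulate into a contradiction. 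No amount of care with Lemmas~\ref{conpath} and \ref{short} is indicated that would rescue this.

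The missing idea is that \eqref{Contra} is a statement about \emph{transversals}, not about $\mu$, and the paper uses it at full strength. Take a minimum transversal $S$ with $S\subseteq V(Q)$; by \eqref{Contra} it has more than $m^{2}n^{1/3}$ vertices, so some arc $Q_{e}$ contains more than $mn^{1/3}$ of them. Replace $S\cap V(Q_{e})$ by a \emph{shortest} subpath $P=q_{i}\dots q_{j}\subseteq Q_{e}$ keeping the transversal property; minimality forces $|P|\ge|Q_{e}\cap S|>mn^{1/3}$ and, crucially, supplies for each end $v\in\{q_{i},q_{j}\}$ a witness $R(v)\in\mathcal L(H)$ meeting $P$ only at $v$ (otherwise $P$ could be shortened). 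Since $R(q_{i})$ and $R(q_{j})$ intersect, their union contains a $q_{i},q_{j}$-path $R$ internally disjoint from $P$, and $C_{0}:=P\cup R$ is the desired cycle. This is a different mechanism from yours: no connectors, no ears, and no case analysis on the structure of $M$ — your stages 1 and 2, while essentially correct, are rendered unnecessary, and the case your argument cannot handle is exactly the one this device is built for.
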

\vspace{-.75em}
\begin{pf}
Let $Q\in \mathcal L(H)$, and pick a minimum transversal $S$ subject to $S\subseteq V(Q)$. By \eqref{Contra}, 
$|S|> m^{2}n^{1/3}$, so there is an edge $e\in F$ with 
$|Q_{e}\cap S|>mn^{1/3}$. Pick a shortest path 
$P:=q_{i}\dots q_{j}\subseteq Q_{e}$ such that $(S\smallsetminus V(Q_{e}))\cup V(P)$ is a transversal.
By minimality, $|P|\ge|Q_{e}\cap S|\ge mn^{1/3}$, 
and for each end $v$ of $P$ there is an $M$-subdivision
 $R(v)\in \mathcal L(H)$ that meets $P$ only at $v$. 
As $\mathcal L$ is pairwise intersecting, there is a $q_{i},q_{j}$-path 
$R\subseteq R(q_{i})\cup R(q_{j})$. Now $C_{0}:=P\cup R$ is a cycle 
with $|C_{0}|>mn^{1/3}$. \qedtri
\end{pf}

\begin{clm}\label{nottr}
Suppose $C\subseteq G$ is a cycle that is not a transversal. Then (i) if $|C|> mn^{1/3}$ then $G$ has a cycle that is longer than $C$ and (ii) $|C|\le 2n^{2/3}$.
\end{clm}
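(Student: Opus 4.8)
The statement to prove is Claim~\ref{nottr}: if $C\subseteq G$ is a cycle that is not a transversal, then (i) when $|C|>mn^{1/3}$ the graph $G$ has a longer cycle, and (ii) $|C|\le 2n^{2/3}$. The natural strategy is to exploit the fact that $C$ is not a transversal: there is some $Q\in\mathcal L(H)$ disjoint from $C$, and because $\mathcal L$ is pairwise intersecting, $Q$ meets... well, actually $Q$ need not meet $C$, so the leverage must come from combining the witness $Q$ with Claim~\ref{Sep}. I would first record that since $C$ is not a transversal, there is $Q_0\in\mathcal L(H)$ with $V(Q_0)\cap V(C)=\emptyset$; then apply Claim~\ref{Sep} with $A:=V(C)$ and $B:=V(Q_0)$ (truncating the larger set to size $\min(|C|,|Q_0|)$) to get a $V(C),V(Q_0)$-connector $\mathcal K$ of size at least $\min(|C|,\mu)/n^{1/3}$.

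For part (i): with $|C|>mn^{1/3}$, I would argue that the connector $\mathcal K$ gives many disjoint paths from $C$ into $Q_0$; following two of these paths from $C$ to their first hits on $Q_0$ and joining those hits along $Q_0$, one produces a path $P$ with both endpoints on $C$ and interior disjoint from $C$. If $\|P\|\ge d_C(x,y)$ for its endpoints $x,y$, splicing $P$ into $C$ (replacing the shorter arc) immediately yields a longer cycle. If instead $\|P\|<d_C(x,y)$, then Lemma~\ref{short} already hands us a cycle $C'$ with $|C|/2<|C'|<|C|$, which is not quite "longer" — so the real work is to choose the connecting path, or to use enough of the connector paths simultaneously, so that we can guarantee a genuinely longer cycle. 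The trick I would pursue: use \emph{several} of the connector paths together with arcs of both $C$ and $Q_0$, and invoke Lemma~\ref{conpath} (with the four paths being two connector paths and two arcs) to force one of the two recombined cycles to be strictly longer than $C$ — this is exactly why Lemma~\ref{conpath} was stated in this form. The count $|\mathcal K|\ge mn^{1/3}/n^{1/3}=m\ge 1$ is not obviously enough, so one likely needs $|\mathcal K|\ge 2$ or more, which follows if $|C|\ge 2n^{1/3}$; the borderline cases $mn^{1/3}<|C|<2n^{1/3}$ may need separate handling, possibly using that $Q_0$ itself is large ($|Q_0|=\mu\ge|C_0|>mn^{1/3}$ from Claim~\ref{base}).

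For part (ii): suppose for contradiction $|C|>2n^{2/3}$. Then $\min(|C|,\mu)$: if $\mu\ge n^{2/3}$... hmm, but we want a pretransversal to contradict \eqref{Max}. I would take $A$ to be a set of roughly $n^{2/3}$ vertices spread evenly along $C$ (say every $\lfloor|C|/|A|\rfloor$-th vertex, so that consecutive chosen vertices are within distance $\le|C|/|A|$), and $B$ the rest of $C$, or else $A\subseteq V(C)$ and $B:=V(Q_0)$ for the disjoint witness $Q_0$. Applying Claim~\ref{Sep} to a well-chosen $A\subseteq V(C)$ of size $\sim 2n^{2/3}$... actually the cleaner route: since $|C|>2n^{2/3}$, pick $A\subseteq V(C)$ with $|A|=\lceil 2n^{2/3}\rceil$ so that $A$ is "evenly spaced" and $B:=V(C)\setminus A$ with $|B|\ge|A|$; Claim~\ref{Sep} gives a connector of size $\ge|A|/n^{1/3}\ge 2n^{1/3}$, and each such $A,B$-path is a chord-path of $C$ whose two ends are close together on $C$ (distance $<|C|/|A|$ along one side... no, the ends are in $A$ and $B$ which interleave). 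The point would be that $2n^{1/3}$ disjoint chord-paths, each of length... and then Lemma~\ref{short} or a pigeonhole on lengths produces a cycle whose length lies strictly between $|C|/2$ and $|C|$ \emph{many times over}, eventually forcing, via part (i) applied repeatedly or via a counting argument on total length $\le n$, a contradiction with $|C|>2n^{2/3}$.

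\textbf{Main obstacle.} The crux is part (i): turning the connector paths and cycle/subdivision arcs into a \emph{strictly longer} cycle rather than merely a comparably long one. Lemma~\ref{conpath} is clearly the intended tool, so the real task is to set up exactly four paths — two disjoint connector segments between $C$ and $Q_0$, and the two arcs of $C$ (or of $Q_0$) they cut off — in the cyclic configuration $v_1P_1v_2P_2v_3P_3v_4P_4v_1$ demanded by that lemma, and then check that the inequality it delivers translates into "strictly longer than $C$." Handling the regime where the connector has only one path (small $|C|$), and making sure the evenly-spaced choice of $A$ in part (ii) gives the distance control needed for Lemma~\ref{short}, are the secondary technical points I would budget time for.
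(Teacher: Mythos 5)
There is a genuine gap. Your setup for part (i) matches the paper's (witness $Q\in\mathcal L(H)$ disjoint from $C$, a $C,Q$-connector $\mathcal T$ from Claim~\ref{Sep}, and Lemma~\ref{conpath} applied to a four-path cycle built from two connector paths, an arc of $C$, and a piece of $Q$), but you explicitly leave the crux unresolved, and the idea you are missing is the pigeonhole onto the \emph{subdivided edges} of $Q$. Since $|C|>mn^{1/3}$ and $\mu\ge\tau>m^{2}n^{1/3}$, Claim~\ref{Sep} gives $|\mathcal T|>m$, so two connector paths $T_{1}=x_{1}\dots y_{1}$ and $T_{2}=x_{2}\dots y_{2}$ end on the \emph{same} $Q_{e}$. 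This is essential: maximality of $Q$ only lets you splice a longer path into a single subdivided edge $Q_{e}$ (replacing an arbitrary arc of $Q$ need not produce an $M$-subdivision). Now in the cycle $C':=x_{1}T_{1}y_{1}Q_{e}y_{2}T_{2}x_{2}Cx_{1}$ (taking the \emph{longer} $C$-arc), Lemma~\ref{conpath} says that either the $Q_{e}$-segment or the $C$-arc is shorter than its complement in $C'$; the first alternative would let you enlarge $Q_{e}$, contradicting maximality of $Q$, so the longer $C$-arc is less than half of $C'$, giving $|C'|>2\cdot|C|/2=|C|$. Your worry about whether $|\mathcal K|\ge 2$ is aimed at the wrong threshold (you need $|\mathcal T|>m$, not $\ge 2$), and your fallback via Lemma~\ref{short}, as you note yourself, only yields a shorter cycle and cannot close part (i).

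Part (ii) as you sketch it would not work. A connector between evenly spaced $A\subseteq V(C)$ and $B=V(C)\smallsetminus A$ is useless: adjacent vertices of $C$ already give trivial one-edge $A,B$-paths, so Claim~\ref{Sep} yields no information. And ``apply (i) repeatedly'' cannot derive a contradiction from $|C|>2n^{2/3}$ alone, since producing longer and longer cycles is not by itself absurd. The actual argument reuses the same connector $\mathcal T$, now of size greater than $2n^{2/3}/n^{1/3}=2n^{1/3}$: the endpoints of $\mathcal T$ on $Q$ cut the edges $Q_{e}$ into at least $\sum_{e}(h_{e}-1)=|\mathcal T|-m>n^{1/3}$ \emph{inner} segments, and the same Lemma~\ref{conpath}/maximality-of-$Q$ step shows every inner segment has more than $|C|/2>n^{2/3}$ edges (otherwise it could be replaced by the longer complementary path through $C$). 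These segments are disjoint, so $|Q|>n^{1/3}\cdot n^{2/3}=n$, a contradiction. Without the pigeonhole onto the $Q_{e}$ and this counting of inner segments, neither part of the claim goes through.
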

\vspace{-.75em}
\begin{pf}
Let $Q\in \mathcal L(H)$ witnesses that 
$C$ is not a transversal. 
By Claim~\ref{Sep}, there is a $C,Q$-connector $\mathcal T$ 
with $|\mathcal T|>\min\{|C|,\mu\}/n^{1/3}$. 

(i)  Suppose $|C|> mn^{1/3}$. Pigeonholing, there is an edge $e\in F$ such that $Q_{e}$ meets $\mathcal T$ at least twice.  Say $T_{i}=x_{i}\dots y_{i}\in \mathcal T, i\in[2]$. Applying Lemma~\ref{conpath}, to the cycle $C':=x_{1}T_{1}y_{1}Q_{e}y_{2}T_{2}x_{2}Cx_{1}$, and using the maximality of $Q$, yields that $|C'|>|C|$.

\begin{figure}[h!]
    \label{fig:Claim3}
    \center{
    \begin{tikzpicture}[scale=0.7]
    \node (C) at (0,0) {};
    \draw[very thick] (C) ellipse [x radius = 10, y radius = 1];
    \node (Q) at (-10,4.5) {};
    \draw[fill,opacity=0.07] (Q) rectangle (10,2.5);
    
    \node[fill,label=$q^w$,shape = circle,inner sep = 0 cm, minimum size = .25 cm] (w) at (-9,3) {};
    \node[fill,label=$q^x$,shape = circle,inner sep = 0 cm, minimum size = .25 cm] (x) at (-5,3) {};
    \node[fill,label=below:$q^y$,shape = circle,inner sep = 0 cm, minimum size = .25 cm] (y) at (1,4) {};
    \node[fill,label=below:$q^z$,shape = circle,inner sep = 0 cm, minimum size = .25 cm] (z) at (9,4) {};

    \node[fill,shape = circle,inner sep = 0 cm, minimum size = .18 cm] (q1) at (-7,3) {};
    \node[fill,shape = circle,inner sep = 0 cm, minimum size = .18 cm] (q2) at (-3,3.33) {};
    \node[fill,shape = circle,inner sep = 0 cm, minimum size = .18 cm] (q3) at (-1,3.66) {};
    \node[fill,shape = circle,inner sep = 0 cm, minimum size = .18 cm] (q4) at (3,4) {};
    \node[fill,shape = circle,inner sep = 0 cm, minimum size = .18 cm] (q5) at (5,4) {};
    \node[fill,shape = circle,inner sep = 0 cm, minimum size = .18 cm] (q6) at (7,4) {};
    
    \node[fill,shape = circle,inner sep = 0 cm, minimum size = .18 cm] (c1) at (-7,0.71) {};
    \node[fill,shape = circle,inner sep = 0 cm, minimum size = .18 cm] (c2) at (-3,0.93) {};
    \node[fill,shape = circle,inner sep = 0 cm, minimum size = .18 cm] (c3) at (-1,1) {};
    \node[fill,shape = circle,inner sep = 0 cm, minimum size = .18 cm] (c4) at (3,0.95) {};
    \node[fill,shape = circle,inner sep = 0 cm, minimum size = .18 cm] (c5) at (5,0.85) {};
    \node[fill,shape = circle,inner sep = 0 cm, minimum size = .18 cm] (c6) at (7,0.72) {};
    
    \node[label={[red]:$\mathcal{T}$}] (t) at (-10.5,1.2) {};
    \node at (-10.5,0) {$C$};
    \node at (-10.5,3.5) {$Q$};

    \draw[very thick, red] (c1)--(q2);
    \draw[very thick, red] (c2)--(q1);
    \draw[very thick, red] (c3)--(q4);
    \draw[very thick, red] (c4)--(q3);
    \draw[very thick, red] (c5)--(q5);
    \draw[very thick, red] (c6)--(q6);

    \draw[thick, black] (w)--(q1)--(x)--(q2);
    \draw[ultra thick, blue] (q2)--(q3);
    \draw[thick, black] (q3)--(y)--(q4);
    \draw[ultra thick, blue] (q4)--(q5)--(q6);
    \draw[thick, black] (q6)--(z);
    
    \node[label={[blue]:$I$}] (I) at (-2, 3.5) {};
    
    \end{tikzpicture}
    }
    \caption{\label{fig1}Partitioned $Q_{wx},Q_{xy},Q_{yz}$ with $0,1,2$ inner paths}
\end{figure}
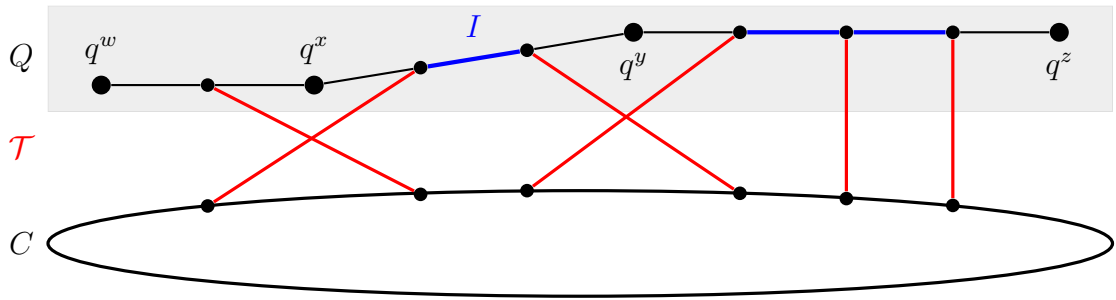

 (ii)  Suppose $|C|> 2n^{2/3}$. For each edge $e=uv\in F$, let $h_{e}$ be 
 the number of paths in $\mathcal T$  that  end in $Q_{e}$.  
  When $h_{e}\ne 0$ these ends 
partition $E(Q_{e})$   into $h_{e}-1$ \emph{inner} paths and two
 \emph{outer} paths containing $q^u$ or $q^v$. See Figure~\ref{fig1}. 
 The number of inner subpaths is at least 
 $$\sum_{e\in F}(h_{e}-1)=|\mathcal T|-m>2n^{1/3}-m>n^{1/3}.$$ 
 By Lemma~\ref{conpath},  $\|I\|>|C|/2> n^{2/3}$ for all inner 
 paths $I$, so $|Q|>n$, a contradiction.~$\triangle$
\end{pf}

\begin{clm}\label{small}
Suppose $C=v_{1}\dots v_{l}v_{1}\subseteq G$ is a cycle with $l> 4n^{2/3}$. Then there is a cycle $C'\subseteq G$ with $l/2<|C'|<l$. 
\end{clm}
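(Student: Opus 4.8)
The plan is to build the shorter cycle $C'$ by exhibiting a ``shortcut'' path across $C$ and then feeding it to Lemma~\ref{short}. First I would fix an integer $k\approx l/4$ (say $k=\lceil l/4\rceil$) and pick two \emph{opposite} arcs $A,B$ of $C$ with $|A|=|B|=k$, positioned so that the two arcs of $C$ joining $A$ to $B$ each have about $(l-2k)/2\approx l/4$ vertices. This choice is engineered to give two facts at once: $k\ge l/4>n^{2/3}$ because $l>4n^{2/3}$; and every $C$-geodesic between a vertex $x\in A$ and a vertex $y\in B$ runs through one of the two joining arcs in full, so $d_{C}(x,y)$ is at least (roughly) $l/4$. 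The latter is the reason $A$ and $B$ must be \emph{opposite}: adjacent arcs would leave $d_{C}(x,y)$ as small as $1$ and the argument would collapse.

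Next I would apply Claim~\ref{Sep} to the disjoint pair $A,B\subseteq V(H)$ --- this is the one place we use that $C$, hence $A$ and $B$, lie in $H$, as they do in every application of the claim --- to obtain an $A,B$-connector $\mathcal K$ in $H$ with $|\mathcal K|\ge k/n^{1/3}\ge l/(4n^{1/3})$. Its members are pairwise vertex-disjoint subgraphs of $G$, so $\sum_{P\in\mathcal K}|P|\le n$. Suppose toward a contradiction that every $P=x_{P}\dots y_{P}\in\mathcal K$ has $\|P\|\ge d_{C}(x_{P},y_{P})$. Since $d_{C}(x_{P},y_{P})$ is of order $l/4$, each such $P$ has order $l/4$ vertices, so
\[\sum_{P\in\mathcal K}|P|\ \gtrsim\ \frac{l}{4n^{1/3}}\cdot\frac{l}{4}\ =\ \frac{l^{2}}{16\,n^{1/3}}\ >\ n,\]
the last inequality being nothing but $l>4n^{2/3}$ squared. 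This contradicts $\sum_{P\in\mathcal K}|P|\le n$, so some $P\in\mathcal K$ has $\|P\|<d_{C}(x_{P},y_{P})$. As $x_{P},y_{P}\in V(C)$, Lemma~\ref{short} applied to $C$ and this $P$ yields a cycle $C'$ with $l/2=|C|/2<|C'|<|C|=l$, as required.

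I expect the main obstacle to be turning the ``$\approx$'' and ``$\gtrsim$'' above into exact inequalities: the chain $l^{2}/16>n^{4/3}$ that powers the contradiction is exactly the hypothesis, so there is no slack to waste, and the rounding in $k=\lceil l/4\rceil$, in the size $\lfloor(l-2k)/2\rfloor$ of the smaller joining arc, and in the integrality of $|\mathcal K|$ must all be tracked. I would deal with this either by a short case analysis on $l\bmod 4$, or by noting that the lower-order error terms are swallowed once $n$ is large enough --- and $n>64$ is forced here, since $\tau>4n^{2/3}$ by \eqref{Contra} while $\tau\le n$. The only other thing needing care is the clean lower bound $d_{C}(x,y)\ge\lfloor(l-2k)/2\rfloor+1$ for $x\in A$, $y\in B$, which is a one-line index computation once the positions of $A$ and $B$ are fixed.
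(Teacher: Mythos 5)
Your proof is correct and takes essentially the same route as the paper's: split $C$ into two opposite arcs of length about $l/4$ separated by joining arcs of length about $l/4$, apply Claim~\ref{Sep} to get a connector of size greater than $n^{1/3}$, use disjointness of its members to find one whose length is less than the $C$-distance between its ends, and finish with Lemma~\ref{short}. The only cosmetic difference is that you phrase the pigeonhole as a contradiction on the total vertex count of the connector rather than directly selecting its shortest member.
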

\vspace{-.75em}
 \begin{pf}
 Let  $l=4k+r, 0\le r<4$. Define paths  $P_{1}=v_{1}\dots v_{k+1}$, $P_{2}=v_{k+2}\dots v_{2k+1}$, $P_{3}=v_{2k+2}\dots v_{3k+3}$, and $P_{4}=v_{3k+4}\dots v_{l}$, where $a\in\{0,1\}$. Then 
 $$C=v_{1}P_{1}v_{k+1}v_{k+2}P_{2}v_{2k}v_{2k+1}P_{3}
 v_{3k+1}v_{3k+2}
 P_{4}v_{l}v_{1},$$
 $|P_{1}|=k+1=|P_{3}|$ and $k-1= |P_{2}|\le|P_{4}|$.
  By Claim~\ref{Sep}, there is a $P_{1},P_{3}$-connector $\mathcal T$ with $|\mathcal T|\ge(k+1)/n^{1/3}>n^{1/3}$. Pigeonholing, there is a path $T=x\dots y\in \mathcal T$ with 
 $$\|T\|+1\le|T|\le \lfloor\frac{n}{|\mathcal T|}\rfloor\le\lfloor n^{2/3}\rfloor< k+1\le|P_{2}|+2\le d_{C}(x,y)+1.$$ 
 Now $\|T\|<d_C(x,y)$, so  we are done by Lemma~\ref{short}. See Figure~\ref{fig:Claim4}. \qedtri 
 \end{pf}

\begin{figure}[h!]
    \label{fig:Claim4}
    \center{
    \begin{tikzpicture}
    \pgfdeclarelayer{bg}    
    \pgfsetlayers{bg,main}
    
    \node[color=red] (C) at (0,0) {$T$};
    
    \draw[color=black!40] (C) ellipse [x radius = 7, y radius = 2];
    \draw[very thick,color=blue] (3,1.82) arc(65:115:7 and 2);
    \draw[very thick,color=blue] (-3,-1.82) arc(245:295:7 and 2);
    
    \draw[very thick] (-4,1.65) arc(125:235:7 and 2);
    \draw[very thick] (4,-1.65) arc(305:415:7 and 2);
    
    \node[] at (-7.5,0) {$P_1$};
    \node[color=blue] at (0,2.4) {$P_2$};
    \node[] at (6.5,0) {$P_3$};
    \node[color=blue] at (0,-2.4) {$P_4$};
    
    \node[shape = circle,inner sep = 0 cm, minimum size = .25 cm] at (0,2) {};
    \node[fill,shape = circle,inner sep = 0 cm, minimum size = .25 cm] at (1,1.97) {};
    \node[fill,shape = circle,inner sep = 0 cm, minimum size = .25 cm] at (2,1.91) {};
    \node[fill,label=$v_{2k}$,shape = circle,inner sep = 0 cm, minimum size = .25 cm] at (3,1.82) {};
    \node[fill,shape = circle,inner sep = 0 cm, minimum size = .25 cm] (y) at (4,1.65) {};
    \node[fill,shape = circle,inner sep = 0 cm, minimum size = .25 cm] (p3) at (-1,1.97) {};
    \node[fill,label=$v_{k+2}$,shape = circle,inner sep = 0 cm, minimum size = .25 cm] at (-3,1.82) {};%
    \node[fill,shape = circle,inner sep = 0 cm, minimum size = .25 cm] (x) at (-4,1.65) {};
    
    \node[fill,label=below:$v_{l}$,shape = circle,inner sep = 0 cm, minimum size = .25 cm] at (-3,-1.82) {};
    \node[fill,label=below:$v_1$,shape = circle,inner sep = 0 cm, minimum size = .25 cm] at (-4,-1.65) {};
    
    \node[fill,label=below:$v_{3k+2}$,shape = circle,inner sep = 0 cm, minimum size = .25 cm] at (3,-1.82) {};
    \node[fill,label=below:$v_{3k+1}$,shape = circle,inner sep = 0 cm, minimum size = .25 cm] at (4,-1.65) {};
    
    \node at (-4.25,1.9) {$x=v_{k+1}$};
    \node at (4.4,2) {$y=v_{2k+1}$};
    
    \node[fill,shape = circle,inner sep = 0 cm, minimum size = .25 cm] (p2) at (-3,1) {};
    \node[fill,shape = circle,inner sep = 0 cm, minimum size = .25 cm] (p5) at (-0.45,0) {};
    \node[shape = circle,inner sep = 0 cm, minimum size = .25 cm] (p6) at (0,-2) {};
    \node[fill,shape = circle,inner sep = 0 cm, minimum size = .25 cm] (p7) at (1,-1.97) {};
    \begin{pgfonlayer}{bg}
        \draw [very thick,color=red] plot [smooth] coordinates {(x)(p2)(p3)(p5)(p7)(y)};
    \end{pgfonlayer}

    \end{tikzpicture}}
    \caption{$||T||+1<|P_2|+2$ suffices (a tight example).}

\end{figure}
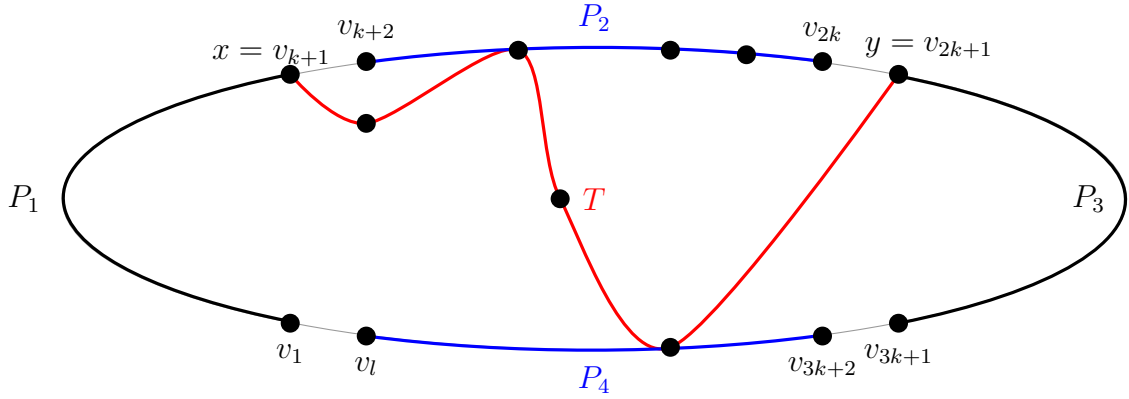

Now we complete the proof of Theorem~\ref{Main}.     By Claim~\ref{base}, $H$ has a maximum cycle $C_{1}$ with $|C_{1}|\ge mn^{1/3}$. By Claim~\ref{nottr}(i), $C_{1}$ is a 
 transversal. Let $C\subseteq G$ be a minimum cycle subject to
  $C$ being a transversal. By \eqref{Contra},  $|C|> 4n^{2/3}$.  
  By Claim~\ref{small}, there is a cycle $C^{*}\subseteq G$ with 
  $2n^{2/3}<|C^{*}|<|C|$. By minimality, $|C^{*}|$ is not 
  a transversal, but by Claim~\ref{nottr}(ii) $C^{*}$, is a transversal, a contradiction.  
\end{proof}

\section{Pairwise-intersecting families}

In this section we prove Proposition~\ref{prop:I} and speculate on possible improvements.

\begin{proof}[Proof of Proposition~\ref{prop:I}]

Suppose $\mathcal L(M,G)$ is not pairwise intersecting; say $Q,R\in 
\mathcal L(M,G)$ are disjoint. Let $\mathcal T:=\{T_{i}:i\in [t]\}$ be a
 maximum $Q,R$-connector. Define a bipartite multigraph $\mathcal H$ 
 with $V(\mathcal H)=\bigcup_{e\in F}(Q_{e}\cup R_{e})$ and 
 $E(\mathcal H)=\mathcal T$, where the ends of the edge $T_{i}$ in 
 $\mathcal H$ are $Q_{e}$ and $R_{e'}$ when the ends of the 
 path $T_{i}$ in $G$ are in $Q_{e}$ and $R_{e'}$; if there is more than one option for the end of the edge $T_{i}$ (when this end is a branch vertex with degree at least three), then make an arbitrary choice. Let $X$ be the set of cut edges in $F$, $Y=F\smallsetminus X$; then $c=|X|$. Using Menger's Theorem, it suffices to show that $\|\mathcal H\|\le m^{2} -cm+\binom{c}{2}$.

  
 We first show that $\mathcal H$ is simple; thus $\|\mathcal H\|\le m^{2}$ 
 (this is essentially the argument in \cite{MML}). If not, then there are 
 $e,e'\in F$ and distinct $i,j\in [t]$ with $T_{i}, T_{j}\in E(Q_{e},R_{e'})$. 
 But then, by Lemma~\ref{conpath}, we can enlarge $Q_{e}$ or $R_{e'}$, 
 contradicting maximality. 
 
 Now suppose $e=xy\in X$, $M_{1}$ 
 \& $M_{2}$ are the two components of $M-e$ with $y\in V(M_{2})$ 
 and $e'\in E(M_{2})$. For $i\in [2]$, put
  $Q^{i}=\bigcup_{f\in E(M_{i})}Q_{f}$ and 
  $R^{i}=\bigcup_{f\in E(M_{i})}R_{f}$. Suppose 
  $Q_{e}R_{e'},R_{e}Q_{e'}\in E(\mathcal H)$. Then there are paths 
  $P=u\dots v, P'=u'\dots v'\in\mathcal T$ with
    $u\in P_{e}$, $v\in R_{e'}$, $u'\in R_{e}$ and $v'\in Q_{e'}$.
    As $M$ is connected, there are a $q^{y},v'$-path $\dot Q\subseteq Q$ and
     an $r^{y},v$-path $\dot R\subseteq R$. See Figure~\ref{fig:Prop11}.

 \begin{figure}[h!]
    \label{fig:Prop11}
    \center{
    \begin{tikzpicture}[scale = 1.5]
    
    
    
    

    \draw[fill,opacity=0.1,color=red] (0,1) ellipse [x radius = 2, y radius = 0.7];
    \node[color=blue] (Q1) at (6,1) {};
    \node at (6,.1)  {$Q^1$};
    \draw[fill,opacity=0.07, color=blue] (Q1) ellipse [x radius = 1.5, y radius = 0.7];

    \draw[fill,opacity=0.07,color=blue] (0,-1) ellipse [x radius = 2, y radius = 0.7];
    \node[color=red] (R1) at (6,-1){};
    \node at (6,-1.85) {$R^1$};
    \draw[fill,opacity=0.1, color=red] (R1) ellipse [x radius = 1.5, y radius = 0.7];
    
    \node[fill, shape=circle,inner sep=0, minimum size = .25 cm] (Qep) at (-1, 1) {};
    \node[fill, shape=circle,inner sep=0, minimum size = .25 cm] (Rep) at (-1, -1) {};
    
    \node[fill,label = right:$q^x$,shape = circle,inner sep = 0 cm, minimum size = .25 cm] (qx) at (4.5,1) {};
    \node[fill,label = 272:$q^y$,shape = circle,inner sep = 0 cm, minimum size = .25 cm] (qy) at (2,1) {};
    \node[fill,label = right:$r^x$,shape = circle,inner sep = 0 cm, minimum size = .25 cm] (rx) at (4.5,-1) {};
    \node[fill,label = 272:$r^y$,shape = circle,inner sep = 0 cm, minimum size = .25 cm] (ry) at (2,-1) {};
    
    \node[fill,label = below:$v'$,shape = circle,inner sep = 0 cm, minimum size = .18 cm] (vp) at (0,1){};
    \node[fill,label = below:$v$,shape = circle,inner sep = 0 cm, minimum size = .18 cm] (v) at (0,-1){};
    
    \node[fill,label=below:$u$,shape = circle,inner sep = 0 cm, minimum size = .18 cm] (u) at (3.3,1){};
    \node[fill,label = below:$u'$, shape = circle,inner sep = 0 cm, minimum size = .18 cm] (up) at (3.3,-1){};
    
    \node[label={:$Q_e$}] at (3.6,1){};
    \node[label={:$R_e$}] at (3.6,-1){};
    \node[label={$Q_{e'}$}] at (-0.5,1){};
    \node[label={$R_{e'}$}] at (-0.5,-1){};

    \node[fill, shape=circle,inner sep=0, minimum size = .25 cm] (qen) at (1,1) {};
    \node[fill, shape=circle,inner sep=0, minimum size = .25 cm] (ren) at (1,-1) {};

    \draw[ultra thick,color=red] (qy)--(u);
    \draw[ultra thick,color=red] (ry)--(up);
    \draw[ultra thick,red] (v)--(ren);
    \draw[thick, red] (ry)--(ren);
    \draw[thick,red] (up)--(rx);
    \draw[thick, red] (u)--(v);
    
    \draw[thick,color = blue] (qx)--(u);
    \draw[thick,color=blue] (qy)--(qen);
    \draw[thick,color=blue] (vp)--(up);
    \draw[color=blue,ultra thick,dashed] (qy)--(u);
    \draw[color=blue,ultra thick,dashed] (up)--(ry);
    \draw[ultra thick, blue] (vp)--(qen);

    \draw[thick] (Qep)--(vp);
    \draw[thick] (Rep)--(v);
    \draw[dashed, ultra thick] (vp)--(qen);
    \draw[dashed,ultra thick] (v)--(ren);

    \node[label={:$\dot{Q}$}] at (1.25, 1) {};
    \node[label={:$P$}] at (2.5,.01){};
    \node[label={:$\dot{R}$}] at (1.25, -1) {};
    \node[label={:$P'$}] at (2.5,-0.55){};
    \node at (0,.1){$Q^2$};
    \node at (0,-1.85){$R^2$};

    \end{tikzpicture}
    }
    \caption{Violating maximality of $Q,R$}

\end{figure}
     
     One of the $M$-subdivisions
    \[Q^{1}\cup q^{x}Q_{e}q^{y}\dot Qv'P'u'R_{e}r^{y}\cup R^{2}
    \textrm{ and }
    R^{1}\cup r^{x}R_{e}r^{y}\dot RvPuQ_{e}q^{y}\cup Q^{2}\]
    has size greater than $\mu$, contradicting maximality. 
 Thus at most one of $Q_{e}R_{e'},R_{e}Q_{e'}$ is an edge of $\mathcal H$. So
 \begin{align*}
 \|\mathcal H\|&\le |Y|^2+\frac{1}{2}(|X||Y|+|Y||X|+|X|(|X|-1))\\&=(m-c)^{2}+c(m-c)+\frac{c(c-1)}{2}=m^{2}-cm+\binom{c}{2}. \qedhere
 \end{align*}

\end{proof}

We have no reason to believe that Proposition~\ref{Sep} is close to optimal. It may be
interesting to investigate the case that $M$ is a tree. For instance, it is not
hard to see that $Q_{e}R_{e'}\notin E(\mathcal{H})$ for all $e,e'\in F$ with $e,e'$
incident to leaves in $M$. Thus if $M$ is a star and $G$ is connected,  then $\mathcal L(M,G)$ is pairwise intersecting. It is well known \cite[Exercise 1.27]{Die} that if $G$
is a tree and $\mathcal{L}:=\mathcal{L}(M,G)$ is pairwise 
intersecting, then $\tau(\mathcal{L})=1$. 

We also have no reason to believe that Theorem~\ref{Main} is close to optimal.
For the case $M=K_{2}$ (maximum paths) $\tau$ could be constant. One advantage
of considering the more general problem is that it may be easier to develop techniques
for proving lower bounds in this setting.

\begin{ack}
We thank a referee for suggesting that we extend our original  argument for longest path transversals to maximum subdivision transversals.
\end{ack}

\bibliographystyle{plain}
\bibliography{Tau+}

\begin{thebibliography}{10}

\bibitem{Die}
Reinhard Diestel.
\newblock {\em Graph theory}, volume 173 of {\em Graduate Texts in
  Mathematics}.
\newblock Springer, Berlin, fifth edition, 2018.
\newblock Paperback edition of [ MR3644391].

\bibitem{Gru}
Branko Gr\"{u}nbaum.
\newblock Vertices missed by longest paths or circuits.
\newblock {\em J. Combinatorial Theory Ser. A}, 17:31--38, 1974.

\bibitem{Gal}
G.~Katona, editor.
\newblock {\em Theory of graphs}. Academic Press, New York-London;
  Akad\'{e}miai Kiad\'{o}, Budapest, 1968.

\bibitem{MML}
James~A. Long, Jr., Kevin~G. Milans, and Andrea Munaro.
\newblock Sublinear longest path transversals.
\newblock {\em SIAM J. Discrete Math.}, 35(3):1673--1677, 2021.

\bibitem{Men}
K.~Menger.
\newblock Zur allgemeinen {K}urventhoerie.
\newblock {\em Fund. Math.}, 10:96--115, 1927.

\bibitem{RS}
Dieter Rautenbach and Jean-S\'{e}bastien Sereni.
\newblock Transversals of longest paths and cycles.
\newblock {\em SIAM J. Discrete Math.}, 28(1):335--341, 2014.

\bibitem{Wal}
Hansjoachim Walther.
\newblock \"{U}ber die {N}ichtexistenz eines {K}notenpunktes, durch den alle
  l\"{a}ngsten {W}ege eines {G}raphen gehen.
\newblock {\em J. Combinatorial Theory}, 6:1--6, 1969.

\bibitem{WV}
Hansjoachim Walther.
\newblock {\em Anwendungen der {G}raphentheorie}.
\newblock VEB Deutscher Verlag der Wissenschaften, Berlin, 1979.

\bibitem{West}
Douglas~B. West.
\newblock {\em Introduction to graph theory}.
\newblock Prentice Hall, Inc., Upper Saddle River, NJ, 1996.

\bibitem{Zam1}
Tudor Zamfirescu.
\newblock A two-connected planar graph without concurrent longest paths.
\newblock {\em J. Combinatorial Theory Ser. B}, 13:116--121, 1972.

\bibitem{Zam2}
Tudor Zamfirescu.
\newblock On longest paths and circuits in graphs.
\newblock {\em Math. Scand.}, 38(2):211--239, 1976.

\end{thebibliography}

\end{document}